\documentclass[a4paper,11pt,leqno]{amsart}

\usepackage{amsmath}
\usepackage{amsthm}
\usepackage{amsfonts}
\usepackage{amssymb}

\usepackage{tikz}
\usepackage{enumerate}
\usepackage{colonequals}

\newcommand{\inv}{^{-1}}
\newcommand{\id}{\operatorname{id}}
\newcommand{\eps}{\varepsilon}
\newcommand{\N}{\mathbb N}
\newcommand{\Z}{\mathbb Z}
\newcommand{\R}{\mathbb R}
\newcommand{\C}{\mathbb C}
\newcommand{\bS}{\mathbb S}
\newcommand{\diam}{\operatorname{diam}}

\newtheorem{theorem}{Theorem}[section]
\newtheorem*{theorem*}{Theorem}{\bf}{\it}
\newtheorem{proposition}[theorem]{Proposition}
\newtheorem*{proposition*}{Proposition}{\bf}{\it}
\newtheorem{lemma}[theorem]{Lemma}
\newtheorem*{lemma*}{Lemma}{\bf}{\it}
\newtheorem{corollary}[theorem]{Corollary}
\theoremstyle{definition}
\newtheorem{definition}[theorem]{Definition}
\newtheorem*{definition*}{Definition}
\theoremstyle{remark}
\newtheorem{remark}[theorem]{Remark}
\numberwithin{equation}{section}
\newtheorem{example}[theorem]{Example}
\numberwithin{equation}{section}

\title{A Newman property for BLD-mappings}
\author{Rami Luisto}
\address{Department of Mathematics and Statistics, P.O. Box 35, FI-40014 University of Jyv\"askyl\"a, Finland
\and
Department of Mathematical Analysis, Sokolovska 83, Praha 8, 186 75, Charles University in Prague}
\email{rami.luisto@gmail.com}
\thanks{
  The first author was partially supported by a grant of the Finnish Academy of Science and Letters,
  the Academy of Finland
  (grant 288501 `\emph{Geometry of subRiemannian groups}')
  and by the European Research Council
  (ERC Starting Grant 713998 GeoMeG `\emph{Geometry of Metric Groups}').
}

\subjclass[2010]{30L10, 30C65, 57M12}
\date{\today}

\begin{document}

\begin{abstract}
  We define a Newman property for BLD-mappings and 
  prove that for a BLD-mapping between generalized manifolds equipped with complete path-metrics
  this property is equivalent to the branch set being porous when the codomain is LLC.
\end{abstract}

\maketitle

\section{Introduction}
\label{sec:Intro}

The class of BLD-mappings was introduced by Martio and V\"ais\"al\"a in
\cite{MartioVaisala} as subclass of quasiregular mappings that 
preserve solutions of certain elliptic partial differential 
equations. In that paper Martio and V\"ais\"al\"a 
showed, among other results, that the class
of BLD-mappings has several equivalent definitions. 
In this paper we use the following geometric definition applicable
in any path-metric space.
For the definitions of a branched cover, the length of a path
and path-metric spaces, see Section \ref{sec:Preli}.
\emph{
  Given $L \geq 1$, a branched cover $ f \colon M \to N$ between metric spaces
  is a mapping of \emph{Bounded Length Distortion},
  or ($L$-)BLD for short, if
  for all paths $\gamma \colon [0,1] \to X$, we have}
  \begin{align*}
    L \inv \ell(\gamma)
    \leq \ell(f \circ \gamma)
    \leq L \ell(\gamma).
    \tag{BLD}
  \end{align*}
We denote by $B_f$ the \emph{branch set} of a branched cover $f$, i.e.\ the set of
points in the domain where the mapping fails to be a local homeomorphism.
A basic example of a BLD-mapping is the planar winding map $w \colon \C \to \C$ ,
$z \mapsto \frac{z^2}{| z |}$ along with its higher dimensional analogues
$w \times \id_{\R^n} \colon \R^{n+2} \to \R^{n+2}$. These examples 
have branch sets of both zero measure and small topological dimension; 
such a property holds in general for BLD-mappings.
Indeed, for an $L$-BLD mapping $f$ between $n$-dimensional Euclidean domains the topological
dimension of the branch set is at most $n-2$ by 
the Cernavskii-V\"ais\"al\"a theorem, see e.g.\ \cite{Vaisala},
and $\dim_{\mathcal{H}}(B_f) < n - \eps(n,L)$, see 
e.g.\ \cite{BonkHeinonenBranch} and \cite[Section 4.27]{MartioVaisala}.
The topological dimension of the branch set is not completely understood,
even in dimension three; for example the Church-Hemmingsen conjecture in \cite{ChurchHemmingsen1} asks
when the Cernavskii-V\"ais\"al\"a theorem is exact.
On the other hand, the estimate for the Hausdorff dimension is strict in the 
sense that for any $n \geq 3$ and $\eps > 0$ there exists a BLD-mapping $g \colon \R^n \to \R^n$ with
$\dim_{\mathcal{H}}(B_g) \geq n - \eps$, see e.g.\ \cite{HeinonenRickman} and \cite[Section 4.27]{MartioVaisala}.

With the Hausdorff dimension of the branch set bounded away from $n$, it 
follows that the branch set of a BLD-mapping $f \colon \Omega \to \R^n$ has $n$-measure zero.
It is an interesting question, posed for example by Heinonen and Rickman in \cite[Remarks 6.7 (b)]{HeinonenRickman},
that in what generality does this hold. The measure of the branch set is also connected to
certain limit properties for BLD-mappings,
see \cite[Remark 4.5]{Luisto-Characterization} for commentary on the topic.
There are positive results concerning this question for BLD-mappings outside the 
setting of Riemannian manifolds;
Heinonen and Rickman show in \cite[Theorem 6.4]{HeinonenRickman} that for BLD-mappings
between quasiconvex spaces of type A (see Section \ref{sec:Preli} for definitions)
the branch set of a BLD-mapping has measure zero. It is not completely known
which assumptions of \cite[Theorem 6.4]{HeinonenRickman} can be relaxed 
-- especially if, in addition, the spaces are
assumed to be generalized manifolds. We construct in Section \ref{sec:example}
a BLD-mapping between Ahlfors 2-regular metric spaces with $\mathcal{H}^2(B_f) > 0$
which exemplifies some of the restrictions for possible positive results in the future and the limitations
of the methods in the current paper.

Our approach for the study of the measure theoretic size of the branch
set is motivated by the fact that for a BLD-mapping
$f \colon \Omega \to \R^n$ the branch set is \emph{locally porous},
i.e.\ for any point $x_0 \in B_f$ there exists a constant $\delta > 0$ 
and a neighborhood $U$ of $x_0$ such that for any ball
$B(x,r) \subset U$ there is a point $y \in B(x,r)$ for which
$B_f \cap B(y,\delta r ) = \emptyset$. In an Ahlfors $n$-regular
metric measure space a porous set has measure zero, so 
in such a setting the question of Heinonen and Rickman can be 
approached via the porosity of the branch set.
This approach has had previous success; in
\cite{OnninenRajala} Onninen and Rajala study the branch set
of quasiregular mappings between generalized manifolds of
type A via a stratification to porous sets
and reach estimates on the Hausdorff dimension of the branch set.
Our approach to this question in the setting of BLD-mappings
is motivated by the proof of \cite[Theorem 4.25]{MartioVaisala};
the theorem in question implies that the branch set of a Euclidean
BLD-mapping is porous. It turns out that the porosity is connected
to the following concept of a Newman property for a BLD-mapping:
\emph{$f$ satisfies an $\eps_0$-Newman type property at $x_0 \in X$
if there is a neighborhood $U$ of $x_0$ and a constant $\eps_0 > 0$ 
such that in any ball $B(z,r) \subset U$ with $z \in B_f$,
there exists a point $w \in B(z,r)$ such that}
\begin{align*}
  \diam\left(B(z,r) \cap f \inv \{ f(w) \} \right)
  \geq \eps_0 r.
\end{align*}
The term Newman property is motivated by an injectivity 
radius result of Newman from 1931, \cite{Newman},
which essentially states that if a finite group $G$
acts nontrivially on a connected topological manifold $M$
then at least one of the orbits has a diameter of at least
$\eps = \eps(M) > 0$. Newman's result was generalized by
McAuley and Robinson in \cite{McAuleyRobinson-83} to state that for any manifold $M$
there is a constant $\eps > 0$ such that any surjective
branched cover $f \colon M \to N$ onto a manifold $N$
is either a homeomorphism or has a fiber with a diameter of
at least $\eps$. Later on in \cite{McAuleyRobinson-84}
they elaborated on the topic and coined the term 
Newman property; note, however that our terminology 
differs slightly from theirs.

For BLD-mappings the Newman property is strongly connected to the
porosity of the branch set and
our main theorem is as follows. For the definition of the terms
see Section \ref{sec:Preli}.
\begin{theorem}\label{thm:Main}
  Let $f \colon M \to N$ be an $L$-BLD mapping between generalized $n$-manifolds
  both equipped with a complete path-metric.
  Suppose that the mapping $f$ satisfies a Newman property at $x_0 \in M$.
  Then the branch set of $f$ is locally porous at $x_0$.
  
  Furthermore if $N$ is LLC, then these conditions are equivalent.
\end{theorem}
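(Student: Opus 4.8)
The plan is to prove the two implications separately, with only the converse using the LLC hypothesis on $N$. Throughout I will lean on the branched-cover machinery for BLD-maps collected in Section~\ref{sec:Preli}: discreteness and openness, normal neighborhoods $U(x,f,t)$ satisfying $B(x,t/L)\subset U(x,f,t)\subset B(x,Lt)$ with $f(U(x,f,t))=B(f(x),t)$, the two-sided length bound (BLD), path lifting from completeness, and the bound $i(x,f)\le C(L,n)$ on the local index. For the implication Newman $\Rightarrow$ porosity, fix the Newman data $U,\eps_0$ and a ball $B(x,r)\subset U$. Either $B(x,r/2)$ already misses $B_f$ and we are done, or it contains a branch point $z$, so $B(z,r/2)\subset B(x,r)$; applying the Newman property in $B(z,r/2)$ yields $p$ and two preimages $a,b\in B(z,r/2)$ with $f(a)=f(b)=p$ and $d(a,b)\ge \eps_0 r/2$. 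Since $B_f$ has measure zero, $f(B_f)$ is nowhere dense, so after replacing $p$ by a nearby regular value and using openness and (BLD) to preserve the separation up to a factor, I may assume $a,b\notin B_f$.

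The local fact I would exploit is that for a regular point $w$ the maximal normal neighborhood $U(w,f,t)$ is \emph{branch-free}: it has degree $i(w,f)=1$, and a degree-one discrete open proper map onto a ball is a homeomorphism, so no point of $U(w,f,t)$ can carry local index $\ge 2$. This neighborhood survives until its closure meets another point of $f\inv(p)$, producing a branch-free ball about $w$ of radius comparable to the distance from $w$ to the nearest other point of the fiber. The main obstacle in this direction is upgrading \emph{``two points of the fiber are $\eps_0 r$ apart''} to \emph{``one point of the fiber has a definite-size branch-free ball about it''}: a priori the preimages of $p$ could cluster, so that each individual preimage has a close fiber-neighbor even while the fiber has large diameter. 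I would control this with the multiplicity bound, which caps the number of preimages of $p$ in $B(z,r)$ by some $K=K(L,n)$, together with the fact that two nearby sheets must merge at an interposed branch point, pinning clusters against $B_f$ while opening genuine gaps between them; extracting a single preimage whose nearest fiber-neighbor is $\gtrsim \eps_0 r/K$ yields a branch-free ball of radius $\delta r$ with $\delta\sim \eps_0/(KL^2)$.

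For the converse, assume $B_f$ is porous and $N$ is $\lambda$-LLC, and fix a branch point $z$ with $B(z,r)\subset U$ and $m:=i(z,f)\ge 2$. With $s=r/(2L)$ the normal neighborhood $W:=U(z,f,s)\subset B(z,r)$ maps onto $B(f(z),s)$ as a degree-$m$ branched cover with $z$ the unique preimage of $f(z)$. Since $f(B_f)$ has topological dimension at most $n-2$ in the generalized manifold $N$, the set $B(f(z),s)\setminus f(B_f)$ is connected and carries nontrivial loops, so $f$ restricts there to a genuine $m$-sheeted covering with nontrivial monodromy. Porosity supplies a branch-free ball $B(y_0,\rho_0)\subset W$ of radius comparable to $\delta r$ (chosen in the outer part of $W$, so $d(y_0,z)$ is comparable to $r$), on which $f$ is a homeomorphism; with $q_0=f(y_0)$, nontrivial monodromy gives a loop at $q_0$ whose lift from $y_0$ ends at a second preimage $y_1\ne y_0$ of $q_0$.

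It remains to bound $d(y_0,y_1)$ from below by $\eps_0 r$, and this is exactly where LLC is indispensable. If the $m$ preimages of $q_0$ clustered in a ball of radius $\ll r$, then, since all $m$ lifts of the radial segment from $f(z)$ to $q_0$ join $z$ to that cluster, LLC-2 would let me connect two of the clustered sheets by a short path skirting $f(B_f)$, trivializing the monodromy between them and contradicting $m\ge 2$. Quantifying this step — converting the purely topological nontriviality of the cover into the metric lower bound $\diam\bigl(f\inv(q_0)\cap B(z,r)\bigr)\ge \eps_0 r$ by playing LLC off against the two-sided length bound — is the crux of the converse. It is the scale-invariant analogue for BLD-maps of the fiber-diameter theorems of Newman and McAuley--Robinson, and I expect it, rather than the covering-space bookkeeping, to be the principal difficulty.
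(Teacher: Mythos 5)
Your outline diverges from the paper in both directions, and in both the step you yourself flag as the ``main obstacle'' is a genuine gap rather than a routine detail. In the forward direction, the clustering problem is real and your proposed fix does not work: from $\diam\bigl(f\inv\{p\}\cap B(z,r)\bigr)\ge\eps_0 r/2$ and a multiplicity bound $K$ you cannot extract a single preimage whose nearest fiber-neighbour is $\gtrsim \eps_0 r/K$ --- the fiber can split into two tight clusters that are far from each other, so that every point has a fiber-neighbour at distance $o(r)$ while the diameter stays $\ge\eps_0 r/2$. Moreover, ``two nearby sheets must merge at an interposed branch point'' is not a theorem: two preimages of $p$ lying close together force branching only if they lie in a common normal neighbourhood of a single point, which is exactly what clustering prevents you from arranging. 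Even the first reduction (replacing $p$ by a nearby regular value) quietly uses that $B_f$ has measure zero, which in this metric generality is part of what the whole investigation is about. The paper sidesteps all of this: Proposition \ref{Proposition:NewmanImpliesPorosity} argues by contradiction, blowing up along a sequence of scales witnessing non-porosity, showing (via a uniform multiplicity bound and \cite[Theorem 4.1]{Luisto-Characterization}) that the rescaled maps converge to an $L$-BLD map whose branch set contains a ball, contradicting Lemma \ref{lemma:NoInterior}. The Newman property is used only to guarantee that the non-injectivity survives passage to the limit; no per-scale extraction of a branch-free ball is ever needed.

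In the converse direction you have misplaced where the LLC hypothesis does its work. Your assertion that porosity supplies a branch-free ball ``on which $f$ is a homeomorphism'' is unjustified: branch-free means $f$ is a local homeomorphism there, and upgrading this to injectivity on a comparable ball is precisely the content of the Zorich-type Lemma \ref{lemma:BLDZorich}, whose proof is where LLC (local linear \emph{contractibility}) is actually consumed --- one contracts the small loop $f\circ\beta$ inside a controlled ball, lifts the homotopy, and uses the diameter bound of Lemma \ref{lemma:DiameterBound} to keep the lifts away from $B_f$. Once that injectivity radius is available, the fiber-diameter lower bound is immediate and needs no monodromy: a regular value $y\in fB(w,\phi\delta(4L^2)\inv s)$ has one preimage in that small ball and a second preimage obtained by lifting a geodesic from $f(z)$ to $y$ starting at the branch point $z$; injectivity of $f$ on $B(w,\phi\delta(2L^2)\inv s)$ forces the two preimages to be $\ge\phi\delta(4L^2)\inv s$ apart. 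By contrast, the step you identify as the crux --- converting nontrivial monodromy into a metric separation of sheets via ``LLC-2'' --- is left unproved, and it invokes local linear connectivity of complements of $fB_f$, a condition different from (and not implied by) the contractibility-type LLC assumed in the theorem. As written, both halves of the proposal stop exactly where the paper's two key lemmas begin.
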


We note that natural examples of generalized manifolds satisfying the
local LLC condition arise from the classes of Riemannian and subRiemannian manifolds,
(see Section \ref{sec:Preli}).
The theory of BLD-mappings from a generalized manifold onto Euclidean spaces
and manifolds have been studied e.g.\ by Heinonen and Rickman in \cite{HeinonenRickman},
and we record the following corollary in this spirit.
\begin{corollary}
  Let $f \colon M \to N$ be an $L$-BLD mapping where
  $M$ is a generalized $n$-manifold and $N$ a (sub)Riemannian $n$-manifold,
  both equipped with a complete path-metric.
  Then the following are equivalent for $x_0 \in M$.
  \begin{enumerate}
  \item The mapping $f$ satisfies a Newman property at $x_0$.
  \item The the branch set of $f$ is locally porous at $x_0$.
  \end{enumerate}
\end{corollary}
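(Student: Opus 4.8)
The plan is to derive the corollary directly from Theorem \ref{thm:Main}. Both $M$ and $N$ are assumed to carry complete path-metrics, and $M$ is a generalized $n$-manifold by hypothesis, so it suffices to verify that a (sub)Riemannian $n$-manifold $N$ is a generalized $n$-manifold and that it is LLC. Granting this, the implication (1) $\Rightarrow$ (2) is the content of the first assertion of the theorem, while the converse (2) $\Rightarrow$ (1) is the equivalence of the second assertion, for which the LLC hypothesis on $N$ is precisely what is required.

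The needed properties of $N$ are recorded in Section \ref{sec:Preli}. For a Riemannian $n$-manifold they are classical: every point has a neighborhood bi-Lipschitz equivalent to a Euclidean ball, so $N$ is a topological, hence generalized, $n$-manifold, and the LLC property transfers from $\R^n$ through the bi-Lipschitz charts with a constant stable under bi-Lipschitz maps. For a subRiemannian $n$-manifold the underlying space is again a smooth manifold, so $N$ is a generalized $n$-manifold; its LLC property follows from the local comparison of Carnot--Carath\'eodory balls with coordinate boxes provided by the ball--box theorem, which yields the connecting paths inside controlled balls and their complements with a uniform constant on compact sets.

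I expect the only genuine difficulty to lie in the LLC verification in the subRiemannian case, where the Carnot--Carath\'eodory metric is not bi-Lipschitz to the Euclidean metric and one must appeal to the ball--box estimates to construct the connecting paths required by both LLC conditions. Once these structural facts are established, the equivalence of (1) and (2) follows immediately from Theorem \ref{thm:Main}.
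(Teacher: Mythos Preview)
Your approach is correct and matches the paper's: the corollary is stated immediately after Theorem~\ref{thm:Main} without a separate proof, the implicit argument being precisely that (sub)Riemannian manifolds are generalized manifolds and are LLC, with the justifications (exponential map, ball--box theorem) given in Section~\ref{sec:Preli}.

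One small point: your final paragraph refers to ``both LLC conditions'' and to ``connecting paths inside controlled balls and their complements,'' which sounds like the two-part \emph{linearly locally connected} condition; in this paper LLC means \emph{locally linearly contractible} (Definition in Section~\ref{sec:Preli}), i.e.\ small balls are contractible inside a fixed multiple of themselves. The ball--box theorem does yield this contractibility as well, so the conclusion stands, but you should phrase the verification in terms of contractibility rather than path-connecting.
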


Finally we wish to note that Newman-type properties have seen previous use
in the study of geometric mappings in the form of the McAuley-Robinson
theorem in \cite{McAuleyRobinson-84}, see
e.g.\ in \cite{Rajala-LowerBound} and \cite{KoskelaOnninenRajala}.
Furthermore, recent work of Guo and Williams partly relies on a generalization
of the McAuley-Robinson theorem --
this generalization, \cite[Corollary 4.8]{GuoWilliams1}, is essentially
a McAuley-Robinson theorem for branched covers between generalized
manifolds with the extra assumptions that the domain satisfies a so called $n$-LLC property
and the codomain has bounded turning. Using this they show that for a BLD-mapping $f \colon M \to N$
between generalized $n$-manifolds equipped with Ahlfors regular measures
the branch set has zero measure under the aforementioned additional assumptions
of $M$ being $n$-LLC and $N$ having bounded turning. Beyond
the use of a McAuley-Robinson type theorem their methods
are quite dissimilar compared to the current manuscript; see
\cite{GuoWilliams2} and \cite{GuoWilliams1} for details.

\section{Preliminaries}
\label{sec:Preli}

A mapping between topological spaces is said to be \emph{open} if the image
of every open set is open and \emph{discrete} if the point inverses are discrete sets in the domain.
A continuous, discrete and open mapping is called a \emph{branched cover}.
We follow the conventions of \cite{Rickman-book} and say that $U \subset X$ is a \emph{normal domain} (for
$f$) if $U$ is a precompact domain such that $\partial f (U) = f (\partial U)$. A normal domain $U$
is \emph{a normal neighborhood} of $x \in U$ if $\overline{U} \cap f \inv \{ f(x) \} = \{ x \}$.
By $U(x,f,r)$ we denote the component
of the open set $f \inv B_Y(f(x),r)$ containing $x$.
The following lemma guarantees the existence of normal domains,
for a proof see e.g.\ \cite[Lemma I.4.9, p.19]{Rickman-book} 
or \cite[Lemma 5.1]{Vaisala}.
\begin{lemma}\label{lemma:TopologicalNormalDomainLemma}
  Suppose $f \colon X \to Y$ is a branched cover between two locally compact
  and complete metric spaces. 
  Then for every point $x \in X$ 
  there exists a radius $r_x > 0$ such that
  $U(x,f,r)$
  is a normal neighborhood of $x$ for
  any $r \in (0,r_x)$.
\end{lemma}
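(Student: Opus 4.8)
The plan is to realize the normal neighborhood as a small component $U = U(x,f,r)$ of a preimage ball, after first trapping it inside a precompact neighborhood on which the fiber over $f(x)$ is reduced to the single point $x$. Since $f$ is continuous the fiber $f \inv \{f(x)\}$ is closed, and since $f$ is discrete the point $x$ is isolated in it; hence there is an open $V_0 \ni x$ with $V_0 \cap f \inv \{f(x)\} = \{x\}$. Local compactness of $X$ then furnishes a precompact open set $V$ with $x \in V$ and $\overline{V} \subset V_0$, so that $\overline{V}$ is compact and $\overline{V} \cap f \inv \{f(x)\} = \{x\}$. In particular $x \notin \partial V$, whence $f(x) \notin f(\partial V)$; as $\partial V$ is compact its image $f(\partial V)$ is closed, and I would set $r_x := \operatorname{dist}(f(x), f(\partial V)) > 0$.

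Now fix $r \in (0, r_x)$ and put $U = U(x,f,r)$. The first key step is the containment $U \subset V$. Indeed $U$ is connected and lies in the open set $W := f \inv B_Y(f(x),r)$, and $W$ is disjoint from $\partial V$ because $f(\partial V)$ avoids $B_Y(f(x),r)$ by the choice of $r_x$. Thus $U \cap \partial V = \emptyset$, and writing $U = (U \cap V) \cup (U \setminus \overline{V})$ as a union of two disjoint open sets with $x \in U \cap V$, connectedness of $U$ forces $U \subset V$. Consequently $\overline{U} \subset \overline{V}$ is compact, so $U$ is a precompact domain, and $\overline{U} \cap f \inv \{f(x)\} \subset \overline{V} \cap f \inv \{f(x)\} = \{x\}$ together with $x \in U$ yields the normal neighborhood condition $\overline{U} \cap f \inv \{f(x)\} = \{x\}$.

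It remains to verify the normal domain condition $\partial f(U) = f(\partial U)$, which I expect to be the main point. Since $f$ is open, $f(U)$ is open and $\partial f(U) = \overline{f(U)} \setminus f(U)$; since $\overline{U}$ is compact and $f$ continuous, $f(\overline{U})$ is compact and coincides with $\overline{f(U)}$. Because $U$ is a component of the open set $W$, its topological boundary $\partial U$ is disjoint from $W$ (a boundary point lying in the open set $W$ would lie in the same component $U$, a contradiction), so $f(\partial U) \cap B_Y(f(x),r) = \emptyset$; combined with $f(\partial U) \subset f(\overline{U}) \subset \overline{B_Y(f(x),r)}$ this shows $f(\partial U)$ lies on the sphere $\{w : d_Y(w, f(x)) = r\}$. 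The inclusion $\partial f(U) \subset f(\partial U)$ then follows by writing any $w \in \partial f(U) \subset \overline{f(U)} = f(\overline{U})$ as $w = f(y)$ with $y \in \overline{U}$ and noting $y \notin U$ (else $w \in f(U)$), so $y \in \partial U$. Conversely each $w \in f(\partial U)$ lies on the sphere, hence outside $B_Y(f(x),r) \supset f(U)$, while $w \in f(\overline{U}) = \overline{f(U)}$, giving $w \in \overline{f(U)} \setminus f(U) = \partial f(U)$.

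The genuine content sits in two elementary but essential facts: that the boundary $\partial U$ of a component cannot meet the open set $W$, and the identity $f(\overline{U}) = \overline{f(U)}$ coming from compactness of $\overline{U}$; once these are secured the two inclusions defining the normal domain condition are pure bookkeeping. I would flag the containment step $U \subset V$ as the place where the three defining properties of a branched cover -- continuity, discreteness, and openness -- together with local compactness all come into play, and I would note that completeness is not actually needed for this particular statement.
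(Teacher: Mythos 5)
Your proof is correct and is essentially the standard argument behind this lemma: the paper does not prove it but defers to \cite[Lemma I.4.9]{Rickman-book} and \cite[Lemma 5.1]{Vaisala}, and those proofs proceed exactly as you do --- isolate $x$ in its fiber using discreteness, choose a precompact $V$ with $\overline V\cap f\inv\{f(x)\}=\{x\}$, take $r_x=\operatorname{dist}(f(x),f(\partial V))$ to force $U(x,f,r)\subset V$, and then verify $f(\partial U)=\partial f(U)$ from openness of $f$ and compactness of $\overline U$. One minor caveat: your step ``$\partial U\cap W=\emptyset$'' tacitly uses that the component $U$ is open (so that $\partial U\cap U=\emptyset$), which requires local connectedness of $X$; this is already implicit in the statement itself (a normal domain must be open) and holds wherever the paper applies the lemma (generalized manifolds are locally connected by definition, and complete locally compact path-metric spaces are geodesic by Hopf--Rinow), but it is not automatic for an arbitrary locally compact complete metric space. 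Your remark that completeness is not actually used is accurate.
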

Another topological result we need states that the branch set cannot
have interior points in a very general setting.
The following lemma can be found e.g.\ from \cite[Theorem 3.2]{Vaisala}.
\begin{lemma}\label{lemma:NoInterior}
  Suppose $f \colon X \to Y$ is a branched cover between two locally compact
  and complete metric spaces. 
  Then the branch set $B_f$ has no interior points.
\end{lemma}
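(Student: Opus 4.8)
The plan is to argue by contradiction, reducing to the situation where \emph{every} point is a branch point inside a single normal neighborhood, and then to exhibit a non-branch point there by examining the multiplicity function. So suppose $B_f$ has an interior point $x_0$, and fix a radius $r \in (0, r_{x_0})$ from Lemma \ref{lemma:TopologicalNormalDomainLemma} small enough that the normal neighborhood $U := U(x_0, f, r)$ satisfies $U \subset \operatorname{int} B_f$; this is possible since $\operatorname{int} B_f$ is open and $\diam U(x_0,f,r) \to 0$ as $r \to 0$. Writing $V := f(U)$, the restriction $f|_U \colon U \to V$ is a proper, open, discrete, surjective branched cover such that every point of $U$ lies in $B_f$.

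Next I would study the multiplicity function $k \colon V \to \N$, $k(y) := \#(f\inv\{y\} \cap U)$. Two properties drive the argument. First, $k$ is finite and, after shrinking $r$, bounded on $V$: this is the finiteness of the maximal multiplicity (local index) of a branched cover in a sufficiently small normal neighborhood, which is part of the same normal-domain machinery as Lemma \ref{lemma:TopologicalNormalDomainLemma}. Second, $k$ is lower semicontinuous: if $f\inv\{y\}\cap U = \{x_1, \dots, x_m\}$ with the $x_i$ distinct, then choosing pairwise disjoint neighborhoods $W_i \ni x_i$ and using that $f$ is open, each $f(W_i)$ is a neighborhood of $y$, so every $y'$ in the neighborhood $\bigcap_i f(W_i)$ of $y$ has at least one preimage in each $W_i$, giving $k(y') \ge m$. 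Being integer-valued and bounded, $k$ attains its maximum $\mu$ at some $y^\ast \in V$, with preimages $x_1, \dots, x_\mu$.

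The heart of the argument is that each such $x_i$ is then a non-branch point, contradicting $U \subset B_f$. By lower semicontinuity $k \ge \mu$ on a neighborhood of $y^\ast$, while $\mu$ is the maximum, so $k \equiv \mu$ on a neighborhood $V'$ of $y^\ast$. Pick pairwise disjoint open sets $U_i \ni x_i$ inside $U$ and shrink $V'$ so that $V' \subset \bigcap_i f(U_i)$. For each $y' \in V'$ the $\mu$ disjoint sets $U_i$ each contain at least one point of the $\mu$-point fiber $f\inv\{y'\} \cap U$, so each contains exactly one; hence $f$ is injective on the neighborhood $U_i \cap f\inv(V')$ of $x_i$. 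An injective continuous open map is a homeomorphism onto its image, so $f$ is a local homeomorphism at $x_i$, i.e.\ $x_i \notin B_f$, contradicting $x_i \in U \subset B_f$. Note that this conclusion holds regardless of whether $\mu$ equals $1$ or is large, so the maximality of $\mu$ is the only feature used.

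I expect the one genuinely non-formal input to be the boundedness of $k$ on a small normal neighborhood (equivalently, finiteness of the local index): lower semicontinuity alone yields only local minima, so an upper bound is exactly what makes the maximal-multiplicity point $y^\ast$ available, and it is here that the branched-cover structure, beyond mere openness and discreteness, is used. The reduction in the first paragraph, namely that the normal neighborhoods $U(x_0,f,r)$ shrink to $x_0$ and can be placed inside $\operatorname{int} B_f$, is routine given Lemma \ref{lemma:TopologicalNormalDomainLemma}, and every remaining step is a direct consequence of openness together with the finiteness and semicontinuity of $k$.
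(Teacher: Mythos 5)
Your strategy --- lower semicontinuity of the multiplicity function plus an extremal multiplicity point, with the disjoint-neighborhood count forcing local injectivity at each preimage of the maximizer --- is exactly the classical route behind this statement; note for comparison that the paper itself offers no proof and simply cites \cite[Theorem 3.2]{Vaisala}. The endgame of your argument is correct, but there is a genuine gap, and it sits precisely where you flag it: the boundedness of $k$ on a small normal neighborhood is \emph{not} ``part of the same normal-domain machinery'' as Lemma \ref{lemma:TopologicalNormalDomainLemma}. That lemma only produces a precompact $U$ with $\overline{U} \cap f\inv\{f(x_0)\} = \{x_0\}$ and $\partial fU = f\partial U$; it controls the fiber of the single point $f(x_0)$, not the cardinality of other fibers in $U$. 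The paper's bounded-multiplicity statement, Lemma \ref{lemma:Equidistribution} ($N(f,U) < \infty$ with equidistribution), is proved by topological index and degree theory and explicitly requires both spaces to be generalized $n$-manifolds; for bare locally compact complete metric spaces no such tool is available, and neither the paper nor the references you invoke establish $\sup_V k < \infty$ in this generality. Discreteness plus compactness of $\overline{U}$ gives only that each $k(y)$ is finite, and lower semicontinuity pushes the wrong way; if $k$ is unbounded on $V$ your maximizer $y^\ast$ simply does not exist and the proof halts.

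The good news is that the gap is repairable from the stated hypotheses, keeping your third paragraph verbatim. Since $k$ is integer-valued and lower semicontinuous, each set $E_m := \{y \in V : k(y) \le m\}$ is relatively closed in $V$, and $V = \bigcup_m E_m$ by the pointwise finiteness you proved. The open set $V \subset Y$ is a Baire space ($Y$ is complete, or alternatively locally compact), so some $E_m$ has nonempty interior: there is a nonempty open $O \subset V$ with $k \le m$ on $O$. Being bounded and integer-valued there, $k$ attains its maximum $\mu \ge 1$ over $O$ at some $y^\ast \in O$ (note $\mu \ge 1$ because $O \subset fU$), and lower semicontinuity gives $k \equiv \mu$ on a neighborhood of $y^\ast$; your disjoint-neighborhood argument then yields local injectivity at each point of $f\inv\{y^\ast\} \cap U$, contradicting $U \subset B_f$. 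Once Baire category replaces the global bound, observe that the normal-neighborhood and properness scaffolding in your first paragraph is also unnecessary: any precompact open $U \subset \operatorname{int} B_f$, which exists by local compactness alone, suffices, since the finiteness of $k(y)$ uses only compactness of $\overline{U}$ and discreteness of fibers. This use of completeness via Baire is presumably why completeness appears among the hypotheses of the lemma; it is the one ingredient your write-up acknowledges needing but does not actually supply.
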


In a metric space $X$ the
\emph{length} $\ell(\gamma)$ of a path 
$\gamma \colon [0,1] \to X$ is defined as
\begin{align*}
  \ell(\gamma)
  \colonequals 
  \left\{
    \sum_{j=1}^{k} d(\gamma(t_{j-1}),\gamma(t_j)) \mid 
    0 = t_0 \leq \ldots \leq t_k = 1 
  \right\}.
\end{align*}
Paths with finite and infinite length are called
rectifiable and unrectifiable, respectively, and
a metric space $(X,d)$ is 
a \emph{path-metric space} if
\begin{align*}
  d(x,y)
  = \inf 
  \left\{ 
    \ell(\gamma) \mid \gamma \colon [0,1] \to X, \gamma(0) = x , 
    \gamma(1) = y 
  \right\}
\end{align*}
for all $x,y \in X$.
Similarly, a metric space $(X,d)$ is ($C$-)quasiconvex
if for all $x,y \in X$ there exist a path $\gamma \colon [0,1] \to X$
with $\gamma(0)=x$, $\gamma(1)=y$ and $\ell(\gamma) \leq C d(x,y)$.
A $1$-quasiconvex space is called a \emph{geodesic space}.

The following proposition is a part of Theorem 1.1 in \cite{Luisto-Characterization}.
\begin{proposition}\label{prop:BLDDefinitions}
  Let $f \colon M \to N$ be a continuous mapping
  between two complete locally compact
  path-metric spaces and $L \geq 1$. Then the following are
  equivalent:
  \begin{enumerate}[(i)]
  \item $f$ is an $L$-BLD-mapping, and
  \item $f$ is a discrete $L$-LQ-mapping.
  \end{enumerate}
\end{proposition}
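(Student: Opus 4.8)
The plan is to establish the two implications separately, treating the upper and lower length bounds independently within each, and to work throughout in the normal neighbourhoods furnished by Lemma~\ref{lemma:TopologicalNormalDomainLemma}, on which $f$ is proper and paths can be lifted. For $x \in M$ and small $r>0$ it is convenient to record the local quantities
\begin{align*}
  L_f(x,r) = \sup\{ d(f(x),f(y)) : d(x,y) = r \}, \qquad
  l_f(x,r) = \inf\{ d(f(x),f(y)) : d(x,y) = r \},
\end{align*}
and I will take as the quantitative content of condition~(ii) the pointwise infinitesimal estimates $\limsup_{r\to 0} L_f(x,r)/r \le L$ and $\liminf_{r\to 0} l_f(x,r)/r \ge L\inv$, together with discreteness.

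For the implication (i)$\Rightarrow$(ii), discreteness is immediate since an $L$-BLD mapping is by definition a branched cover. The upper estimate follows from the path-metric structure of $M$: joining $x$ to a nearby $y$ by a path $\gamma$ with $\ell(\gamma)$ arbitrarily close to $d(x,y)$ yields $d(f(x),f(y)) \le \ell(f\circ\gamma) \le L\,\ell(\gamma)$, whence $L_f(x,r) \le (1+o(1))Lr$. For the lower estimate I would lift a near-geodesic of $N$ joining $f(x)$ and $f(y)$ so that it terminates at the \emph{unique} preimage of $f(x)$ lying in a normal neighbourhood of $x$; the resulting lift has length at least $d(x,y)=r$, and the BLD lower inequality applied to it gives $d(f(x),f(y)) \ge (L\inv+o(1))r$. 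The completeness and local compactness feed in precisely through the existence of this lift.

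For the converse (ii)$\Rightarrow$(i), I would first observe that the lower estimate $l_f(x,r)\ge (L\inv+o(1))r$ prevents $f$ from collapsing a neighbourhood, so that, with continuity and discreteness, $f$ is open and hence a branched cover; it then remains to promote the two infinitesimal estimates to the global inequality along an arbitrary path $\gamma\colon[0,1]\to M$. The upper half is the more routine: after making $L_f(x,r)\le (L+\eps)r$ hold uniformly over $\gamma([0,1])$, one sums over a sufficiently fine partition $0=t_0\le\cdots\le t_k=1$ the inequalities $d(f(\gamma(t_{j-1})),f(\gamma(t_j)))\le (L+\eps)\,d(\gamma(t_{j-1}),\gamma(t_j))$ and passes to the supremum, obtaining $\ell(f\circ\gamma)\le L\,\ell(\gamma)$. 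The lower half proceeds symmetrically: each $\gamma(t_j)$ lies on the sphere of radius $r_j=d(\gamma(t_{j-1}),\gamma(t_j))$ about $\gamma(t_{j-1})$, so that $d(f(\gamma(t_{j-1})),f(\gamma(t_j)))\ge l_f(\gamma(t_{j-1}),r_j)\ge (L\inv-\eps)r_j$, and summing over fine partitions recovers $\ell(f\circ\gamma)\ge L\inv\ell(\gamma)$.

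I expect the principal obstacle to be exactly this passage from the pointwise infinitesimal estimates of~(ii) to statements valid uniformly along the compact set $\gamma([0,1])$: the bounds are only $\liminf/\limsup$ assertions as $r\to 0$, and they constrain the mesh of the admissible partitions. Arranging a single mesh threshold that works simultaneously at every point of the path — in particular where $\gamma$ meets the branch set, where $l_f$ is most delicate and one must verify that no length is lost — is where the real work lies; a covering or compactness argument should suffice, but must be organised around the normal neighbourhoods so that the infimum defining $l_f$ is reconciled with the lifting picture near $B_f$. It is worth emphasising that the completeness of the path-metrics and Lemma~\ref{lemma:TopologicalNormalDomainLemma} are precisely what make both the lifting in the forward direction and this uniformisation available.
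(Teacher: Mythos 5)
The paper does not actually prove this proposition: it is quoted as part of Theorem~1.1 of \cite{Luisto-Characterization}, so your attempt is a reconstruction of that external result rather than something to be compared with an in-paper argument. Measured against what the statement says, there is a genuine gap at your very first step: you replace condition~(ii), the two-sided ball inclusions $B(f(x),L\inv r)\subset fB(x,r)\subset B(f(x),Lr)$ valid at \emph{every} scale $r>0$, by the pointwise infinitesimal sphere estimates $\limsup_{r\to0}L_f(x,r)/r\le L$ and $\liminf_{r\to0}l_f(x,r)/r\ge L\inv$. These are not the same condition, and proving them equivalent is essentially the content of the theorem. In particular the co-Lipschitz half of LQ is a \emph{covering} statement -- every point of $B(f(x),L\inv r)$ must have a preimage in $B(x,r)$ -- not a lower bound on the displacement of sphere points; your (i)$\Rightarrow$(ii) argument never produces this inclusion, and your (ii)$\Rightarrow$(i) argument never uses it.

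Second, the obstacle you flag in the lower length bound is not a technicality that ``a covering or compactness argument'' can absorb: it is fatal to the partition approach. For the planar winding map $w(z)=z^2/|z|$, which is a discrete LQ map, one has $w(\delta)=w(-\delta)$, so $l_w(\delta,2\delta)=0$; hence the threshold radius below which $l_f(x,r)\ge(L\inv-\eps)r$ holds tends to $0$ as $x$ approaches the branch point, and $\inf_{t}r(\gamma(t),\eps)=0$ for any path meeting $B_f$. No single mesh works simultaneously at all points of such a path, so the summation argument cannot be uniformised; the correct route (and the one taken in \cite{Luisto-Characterization}) instead controls diameters of components of preimages of continua via normal domains, multiplicity bounds and path lifting, in the spirit of Lemma~\ref{lemma:DiameterBound}. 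Relatedly, the path lifting you invoke in (i)$\Rightarrow$(ii) is not available off the shelf here: \cite[Theorem II.3.2]{Rickman-book} concerns (generalized) manifolds, whereas this proposition is stated for arbitrary complete locally compact path-metric spaces, so the existence of the required lifts must itself be extracted from completeness and the BLD length inequality. The only part of your sketch that goes through as written is the Lipschitz upper bound, and it is easier than you make it: the inclusion $fB(x,r)\subset B(f(x),Lr)$ at all scales gives $d(f(x),f(y))\le L\,d(x,y)$ directly, with no uniformisation needed.
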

Here a mapping $f \colon M \to N$ is \emph{$L$-Lipschitz Quotient}, $L$-LQ for short,
if for all $x \in X$ and $r > 0$ we have
\begin{align*}
  B(f(x),L \inv r) 
  \subset f B(x,r)
  \subset B(f(x),L r).
\end{align*}
We will use the fact that $L$-BLD mappings are $L$-LQ maps repeatedly in the arguments.

A natural setting for the study of branched covers is that of
generalized manifolds; see \cite[Section I.1]{HeinonenRickman} for details.
\begin{definition}
  A locally compact Hausdorff space $M$ 
  which is both connected and locally connected
  is called a \emph{generalized $n$-manifold}, if
  \begin{enumerate}
  \item the cohomological dimension $\dim_\Z M$ is at most $n$, and
  \item the local cohomology groups of $M$ are equal to $\Z$ in degree $n$ 
    and zero in degree $n-1$.
  \end{enumerate}
\end{definition}
The importance of generalized manifolds arises from the fact that
for a branched cover $f \colon M \to N$ between generalized $n$-manifolds
topological index and degree theory are applicable.
Especially topological index and degree theory imply the
following lemma, see e.g.\ \cite[Proposition I.4.10. (1)]{Rickman-book} for a proof.
Here we denote by $N(y,f,U)$ the amount of pre-images of $y$ under $f$ in
$U$, and set $N(f,U)$ to be the maximum of $y \mapsto N(y,f,U)$ in $U$.
\begin{lemma}\label{lemma:Equidistribution}
  Let $f \colon M \to N$ be a branched cover between two generalized
  $n$-manifolds. Then for any normal domain $U \subset M$,
  every point $y \in fU \setminus fB_f$ satisfies
  $N(y,f,U) = N(f,U) < \infty$.
\end{lemma}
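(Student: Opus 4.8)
The plan is to run the standard degree-theoretic argument for discrete open maps between oriented generalized manifolds, the point being that the generalized $n$-manifold hypotheses are exactly what equips $f$ with a well-behaved local index. First I would recall from degree theory (as developed in \cite{Rickman-book}) that to each $x \in M$ one can attach a local index $i(x,f) \in \Z$, defined through the homomorphism induced by $f$ on the local cohomology groups, and that to each $y \in N \setminus f\partial U$ one can attach a degree $\mu(y,f,U) \in \Z$. These are linked by the fundamental summation formula
\begin{align*}
  \mu(y,f,U) = \sum_{x \in f\inv\{y\} \cap U} i(x,f),
\end{align*}
where the sum is finite because $f$ is discrete and $\overline{U}$ is compact, so that $f\inv\{y\} \cap \overline{U}$ is a compact discrete, hence finite, set.

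Next I would pin down the two properties of the index that drive the proof: that the branched cover may be taken sense-preserving, so that $i(x,f) \geq 1$ for every $x$, and that $i(x,f) = 1$ precisely when $f$ is a local homeomorphism at $x$, i.e.\ exactly when $x \notin B_f$. The sense-preserving normalization follows from a connectedness argument, since the sign of $i(x,f)$ is locally constant and $M$ is connected; the characterization of index-one points is the local-degree criterion for local injectivity.

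With these facts in hand the conclusion is bookkeeping. Because $U$ is a normal domain, $fU$ is a connected open set (as $f$ is open and $U$ is a domain) and $fU \cap f\partial U = fU \cap \partial fU = \emptyset$, so $fU \subset N \setminus f\partial U$. Since $\mu(\cdot,f,U)$ is locally constant off $f\partial U$, it takes a single value $k \colonequals \mu(y,f,U)$ throughout the connected set $fU$. For any $y \in fU$ the summation formula together with $i(x,f) \geq 1$ gives $N(y,f,U) \leq k$, whence $N(f,U) \leq k < \infty$. If moreover $y \in fU \setminus fB_f$, then no point of $f\inv\{y\} \cap U$ lies in $B_f$, so every summand equals $1$ and $N(y,f,U) = k$. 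As such regular values exist because $fB_f$ is nowhere dense, the maximum $N(f,U)$ equals $k$ and is attained at every such $y$, giving $N(y,f,U) = N(f,U) = k$.

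The main obstacle, and really the only nontrivial input, is the degree theory itself: defining $i(x,f)$ and $\mu(y,f,U)$, proving the summation formula, and establishing the index-one characterization all rest on the local cohomology structure of generalized $n$-manifolds (local cohomology equal to $\Z$ in degree $n$ and vanishing in degree $n-1$), which supplies the orientation classes that make these integers and the formula meaningful. Once that machinery is invoked, the remainder is the elementary counting argument above.
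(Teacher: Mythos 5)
Your argument is correct and is essentially the paper's own proof: the paper proves nothing itself but cites \cite[Proposition I.4.10 (1)]{Rickman-book}, whose argument runs exactly along your lines --- $U$ normal gives $fU \cap f\partial U = \emptyset$, so the degree $\mu(\cdot,f,U)$ is constant on the connected set $fU$, and the summation formula with $i(x,f) \geq 1$, with equality precisely off $B_f$, yields the count. The one step you gloss over, that the sign of the local index is constant (sense-preserving normalization) and that index one characterizes points off $B_f$ in the generalized-manifold setting, is genuinely nontrivial there, but it is part of the same index-theory machinery the paper defers to, so your write-up matches the intended proof.
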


For BLD-mappings between generalized manifolds the amount of pre-images can be quantitatively
bounded in some cases. This in turn gives rise to further geometrical control for the mapping.
The following lemma is a variation of known family of results in this vein, compare e.g.\ to
\cite[Theorems 4.12 and 4.14]{MartioVaisala}, \cite[Lemma 3.2]{LuistoPankka} and \cite[Theorem 1.4]{Luisto-NoteOnLocalToGlobal}.
We give a short proof for the convenience of the reader. 
\begin{lemma}\label{lemma:DiameterBound}
  Let $f \colon M \to N$ be an $L$-BLD mapping between two generalized
  $n$-manifolds equipped with complete path-metrics and let $U \subset M$ be normal domain. Then for any $r_0 > 0$
  and any continuum $K \subset fU$ with $B(K,r_0) \subset fU$ and $(\diam K) < r_0$,
  all the components of $U \cap f \inv K$ have diameter of at most $2 L (N(f,U)+1)(\diam K)$.
\end{lemma}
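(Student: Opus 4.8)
The plan is to fix a component $C$ of $U\cap f\inv K$ and a base point $x_0\in C$, and to show that every $z\in C$ can be joined to $x_0$ by a path in $M$ of length at most $L(N(f,U)+1)\diam K$; the triangle inequality then yields $\diam C\le 2L(N(f,U)+1)\diam K$, which is the asserted bound with $m=N(f,U)$.

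First I would record a boundary observation that makes the lifting machinery available. Since $K\subset fU$ and $U$ is a normal domain, $f(\partial U)=\partial(fU)$ is disjoint from the open set $fU\supset K$, so $f\inv K\cap\partial U=\emptyset$ and, $f|_{\overline U}$ being proper, $f\inv K\cap\overline U=f\inv K\cap U$ is compact and contained in $U$. In particular each component $C$ is a continuum lying in the interior of $U$. The same computation applied to the slightly larger set $B(K,r_0)\subset fU$ shows that the maximal lift (in the sense of \cite{Rickman-book}) of any path $\beta$ whose image lies in $B(K,r_0)$ and which starts at a point of $f\inv K\cap U$ never reaches $\partial U$; hence every such $\beta$ lifts on all of $[0,1]$, and by the BLD inequality the lift $\tilde\beta$ satisfies $\ell(\tilde\beta)\le L\,\ell(\beta)$.

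The core of the argument is then a sheet-counting construction. Given $z\in C$ with $f(z)\in K$, I would first join $f(z)$ to $f(x_0)$ by a path in $N$ of length at most $\diam K+\eps$; this is possible because $d(f(z),f(x_0))\le\diam K<r_0$ and $N$ is a path-metric space, and any such path automatically stays inside $B(K,r_0)\subset fU$. Lifting it from $z$ produces a path of length at most $L(\diam K+\eps)$ ending at some point of the fiber $f\inv\{f(x_0)\}\cap U$, which by Lemma~\ref{lemma:Equidistribution} contains at most $N(f,U)$ points. If this endpoint is $x_0$ we are done; otherwise I would use the connectedness of $C$ to continue, repeating the construction so as to move through the fiber over $f(x_0)$. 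Since that fiber has at most $N(f,U)$ points and each should be visited at most once, after at most $N(f,U)+1$ such lifted segments the endpoint is forced to equal $x_0$. Summing the lengths of the segments, each bounded by $L(\diam K+\eps)$, and letting $\eps\to0$ gives $d(z,x_0)\le L(N(f,U)+1)\diam K$.

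The step I expect to be the genuine obstacle is exactly the passage between sheets: once the first lift lands on a point $z_1\ne x_0$ of the fiber over $f(x_0)$, one must produce a further short lifted segment that is \emph{guaranteed} to advance toward $x_0$ rather than wander, and one must ensure that the total number of segments is controlled by the multiplicity and not by, say, the intrinsic complexity of $K$ (note that $K$ need not be rectifiably connected, so one cannot simply lift a path running inside $K$). This is where I would lean on the connectedness of $C$ together with the fact that, over the complement of the branch image—which has empty interior by Lemma~\ref{lemma:NoInterior}—the map $f$ restricts to a covering of multiplicity $N(f,U)$. Making the ``advance to a new sheet'' step precise, with its length controlled by $\diam K$ rather than by any diameter of a neighbourhood of $K$, is the crux of the proof and the precise reason the factor $N(f,U)+1$ appears in the final estimate.
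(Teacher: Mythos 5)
Your proposal is not a complete proof: the step you yourself flag as ``the genuine obstacle'' --- producing, after the first lift lands on a fiber point $z_1 \neq x_0$, a further short lifted segment that provably advances to a \emph{new} sheet, and guaranteeing that the process terminates after at most $N(f,U)+1$ segments without revisiting fiber points --- is exactly the content of the lemma, and you leave it unresolved. As written, the chaining argument has no mechanism forcing progress: the lift of a path from $f(z)$ to $f(x_0)$ starting at $z$ ends at \emph{some} point of $f\inv\{f(x_0)\}$, which need not lie in $C$ (the lift travels through $B(K,r_0)$, not through $f\inv K$, so its endpoint can sit in a different component of $U\cap f\inv K$), and iterating the construction could in principle cycle among sheets. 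Appealing to ``the connectedness of $C$'' and to the covering structure over the complement of $fB_f$ is a plausible direction, but no actual argument is supplied, so the factor $N(f,U)+1$ is never earned.

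For comparison, the paper's proof sidesteps path lifting entirely and is a direct pigeonhole argument via the Lipschitz-quotient property. Suppose some component $C$ had $\diam C > 2L(N(f,U)+1)\diam K$. By connectedness one can choose $k = N(f,U)+1$ points $x_1,\dots,x_k \in C$ that are pairwise more than $2L\,\diam K$ apart, so the balls $B(x_i, L\,\diam K)$ are disjoint. Since $f$ is $L$-LQ, each image $fB(x_i, L\,\diam K)$ contains $B(f(x_i), \diam K) \supset K$ (as $f(x_i)\in K$), so any $y_0 \in K$ has a preimage in each of the $k$ disjoint balls, giving $N(y_0,f,U) \geq N(f,U)+1$, a contradiction. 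If you want to salvage your approach you would need to supply the missing sheet-advancing argument, but the LQ route above is both shorter and avoids the issue altogether; I would recommend switching to it.
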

\begin{proof}
  Fix a continuum $K \subset fU$ with $B(K,r_0) \subset fU$ and $\diam K < r_0$ for some $r_0 > 0$.
  Take a component $C$ of $U \cap f \inv K$ and note that if $\diam C > 2L (N(f,U)+1) (\diam K)$,
  then there exists $k \colonequals N(f,U) + 1$ points $x_1, \ldots, x_k \in K$ such that
  $d(x_i,x_j) > 2L (\diam K)$ for any $i \neq j$.
  
  Now we note that since $f$ is $L$-LQ, the images $fB(x_i, L (\diam K))$ of the disjoint balls $B(x_i, L (\diam K))$
  all contain a ball of radius $\diam K$. Since $f(x_i) \in K$ for each $i$, this implies that there exists a point
  $y_0 \in \cap_{i=1}^k fB(x_i,L (\diam K))$, and in particular $N(f,U,y_0) \geq k = N(f,U) + 1$. This is a contradiction,
  and so the original claim holds.
\end{proof}

Another crucial notion for us is local linear contractibility.
\begin{definition}
  A metric space $X$ is \emph{locally linearly contractible} (LLC) if 
  for any compact set $K \subset X$ there exists constants $C \geq 1$ and $ r_0 > 0$ such
  that for all $x \in K$ and $r \in (0,r_0)$ the ball $B(x,r) \subset X$ is contractible in $B(x,Cr)$.
\end{definition}
Natural examples of LLC spaces arise from Euclidean spaces, Riemannian manifolds (via the local bilipschitz equivalence
of Riemannian manifolds and Euclidean spaces given by the exponential map) and subRiemannian manifolds (via the ball-box theorem,
see e.g.\ \cite[Theorem 2.10, p.29]{Montgomery}).

In the literature the LLC condition is often used as a part of the definition of \emph{spaces of type A.}
We do not use spaces of type A neither in the statement nor in the proof of our main
theorem, but describing the properties of the example constructed
in Section \ref{sec:example} is more fluent with this standard terminology.
We refer the reader to \cite[Section I.1]{HeinonenRickman} for more details.
\begin{definition}
  For a metric space $X$ and an integer $n \geq 2$ consider the following properties
  $X$ may or may not have.
  \begin{enumerate}[({A}1)]
  \item $X$ is $n$-rectifiable and has locally finite Hausdorff $n$-measure.
  \item $X$ is $n$-Ahlfors regular.
  \item $X$ is locally bilipschitz embeddable in a Euclidean space.
  \item $X$ is LLC, i.e.\ locally linearly contractible.
  \end{enumerate}
  A space $X$ is said to be \emph{of type A} if it satisfies all of these
  properties and \emph{of type (A$123$)} if it satisfies properties
  (A1), (A2) and (A3).
\end{definition}

The proofs of the main theorems rely on the concept of an \emph{blow up} 
of a given metric space. The blow up of a metric space at a point
can be seen as a generalization of a tangent space; indeed in the setting of smooth manifolds,
a blow up at a point $x_0$ is isometric to the tangent plane
of the space at $x_0$. For the general theory of blow-ups we
refer to \cite{Kapovich-Book}, for their interaction with BLD- and LQ-mappings
see e.g.\ \cite[Section 3]{LeDonnePankka} and \cite[Section 4]{Luisto-Characterization}.

\section{Main results}
\label{sec:Main}

We begin by proving the first implication of the main theorem
in the form of the following Proposition \ref{Proposition:NewmanImpliesPorosity}.
\begin{proposition}\label{Proposition:NewmanImpliesPorosity}
  Let $f \colon M \to N$ be an $L$-BLD-mapping between two generalized 
  manifolds both equipped with a complete path-metric.
  Suppose that $f$ satisfies an $\eps_0$ Newman property at $x_0$.
  Then $B_f$ is locally porous at $x_0$.
\end{proposition}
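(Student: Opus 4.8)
The plan is to argue by contradiction through a local multiplicity count: I will show that if $B_f$ fails to be porous at $x_0$ at a scale $\delta$ to be fixed, then one can manufacture a ``sheet'' of $f$ whose local degree exceeds the finite multiplicity $N(f,U)$ on a normal neighborhood $U$ of $x_0$, which is absurd. First I would fix the setup: using Lemma \ref{lemma:TopologicalNormalDomainLemma} I choose a normal neighborhood $U$ of $x_0$ contained in the neighborhood furnished by the Newman property, and set $k \colonequals N(f,U) < \infty$ (finiteness by Lemma \ref{lemma:Equidistribution}). The negation of local porosity at scale $\delta$ provides a ball $B(x,r) \subset U$ in which $B_f$ is \emph{$\delta r$-dense}, i.e.\ every point of $B(x,r)$ lies within $\delta r$ of $B_f$; the entire goal is to rule this out for a suitable $\delta = \delta(\eps_0,L,k) > 0$, since its failure means some $B(y,\delta r) \subset B(x,r)$ avoids $B_f$.

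The heart is a self-improving sheet construction. By a \emph{sheet over} a ball $B(p,s) \subset N$ I mean a component $C$ of $f \inv B(p,s)$ together with a chosen preimage (``center'') $y \in C$ of $p$; the $L$-LQ property (Proposition \ref{prop:BLDDefinitions}) guarantees $B(y,s/L) \subset C$, Lemma \ref{lemma:DiameterBound} bounds $\diam C$ by a fixed multiple of $L(k+1)s$, and degree theory gives the index identity $N(f,C) = \sum_{z \in f \inv\{p\} \cap C} i(z,f)$. Starting from a root sheet over $B(f(x),s_0)$ with $s_0 \approx r$ chosen (via Lemma \ref{lemma:DiameterBound}) so small that the root lies in $B(x,r)$, I run the following step at a sheet $C$ over $B(p,s)$: since $B(y,s/L) \subset C$ and $B_f$ is $\delta r$-dense, as long as $s/L \gtrsim \delta r$ there is a branch point $z \in B_f \cap C$ near the center; applying the $\eps_0$-Newman property in a ball $B(z,\rho) \subset C$ with $\rho \approx s/L$ produces a point $w$ and \emph{two} preimages $a,b$ of $p' \colonequals f(w) \in B(p,s)$ lying in $C$ with $d(a,b) \geq \eps_0 \rho/2 \gtrsim \eps_0 s/L$. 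Taking the children $C_a, C_b$ to be the components of $f \inv B(p',s')$ in $C$ containing $a$ and $b$, the choice $s' \approx \eps_0 s /(L\cdot L(k+1))$ forces $\diam C_a, \diam C_b < d(a,b)$ by Lemma \ref{lemma:DiameterBound}, so the children are \emph{disjoint}; the index identity then yields $N(f,C) \geq N(f,C_a) + N(f,C_b) \geq 2$.

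Iterating over the full binary tree, every sheet whose scale still satisfies $s/L \gtrsim \delta r$ splits into two disjoint children at a definite scale ratio $s' = \lambda s$ with $\lambda = \lambda(\eps_0,L,k) \in (0,1)$ independent of the level. Hence after $J \approx \log(1/\delta)/\log(1/\lambda)$ levels the degree of the root sheet is at least $2^J$, while the root sits inside $U$ and so $N(f,\mathrm{root}) \leq N(f,U) = k$. Choosing $\delta$ small enough that $2^J > k$ gives a contradiction, so $B_f$ cannot be $\delta r$-dense in any $B(x,r) \subset U$; equivalently each such ball contains $y$ with $B(y,\delta r) \cap B_f = \emptyset$, which is the claimed local porosity. (Balls already missing $B_f$ are trivial, and the location of $x_0$ relative to $B_f$ is immaterial since $B_f$ is closed with empty interior by Lemma \ref{lemma:NoInterior}.)

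I expect the main obstacle to be the uniform geometric bookkeeping that keeps the ratio $\lambda$ \emph{bounded away from $0$ at every level}, and this is exactly where the Newman property is indispensable: it guarantees that the two child preimages $a,b$ are separated by a definite fraction of the current scale, so that the upper diameter bound of Lemma \ref{lemma:DiameterBound} can separate the children at a comparable scale. Without such separation the children's scale would collapse and the tree would terminate at once, consistent with the fact that porosity genuinely can fail absent a Newman property. The remaining delicate points are to keep $B(p',s')$ compactly inside $B(p,s)$ (so the children are precompact sub-sheets surjecting onto their balls, where degree theory applies) and to keep the inradius bound $B(y,s/L) \subset C$ large enough to host a branch point; both are secured by taking $\delta$ small relative to the explicit constants coming from $L$, $k$ and $\eps_0$.
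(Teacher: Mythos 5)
Your argument is correct, but it is a genuinely different proof from the one in the paper. The paper argues by contradiction via a blow-up: it rescales $M$ and $N$ at a sequence of scales where porosity fails, passes to a limit of pointed mapping packages (using the machinery of \cite{Luisto-Characterization}), shows the limit map is $L$-BLD thanks to the uniform multiplicity bound, and then uses the Newman property at every scale to conclude that the limit map fails to be injective on every small ball around every point of $B(\hat X,\hat x_0,\tfrac14)$ --- so its branch set has interior, contradicting Lemma \ref{lemma:NoInterior}. Your proof instead stays at a single scale and trades compactness for counting: the binary tree of pairwise disjoint normal subdomains, whose degrees superadd by the index identity, eventually overwhelms $N(f,U)$. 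What your route buys is an elementary, self-contained argument (only Lemmas \ref{lemma:TopologicalNormalDomainLemma}, \ref{lemma:Equidistribution}, \ref{lemma:DiameterBound} and standard degree theory for normal domains) together with an \emph{explicit} porosity constant $\delta(\eps_0,L,N(f,U))$, which the soft blow-up argument does not produce; what the paper's route buys is brevity and freedom from the bookkeeping you correctly identify as the delicate part. The points you must nail down are exactly the ones you flag: (i) each child ball $B(p',s')$ must be compactly contained in $B(p,s)\subset fU$ so that the components $C_a,C_b$ are themselves normal domains on which Lemma \ref{lemma:Equidistribution} and the index identity apply (this is the standard Rickman-type fact about components of preimages of subdomains of a normal domain, valid for generalized manifolds); (ii) the separation $d(a,b)\gtrsim \eps_0 s/L$ from the Newman property must beat the upper diameter bound $\lesssim L(N(f,U)+1)s'$ of Lemma \ref{lemma:DiameterBound}, which fixes the level ratio $\lambda$ uniformly; and (iii) the inradius bound $B(y,s/L)\subset C$ (from the Lipschitz upper bound plus connectedness of balls in the geodesic space given by Hopf--Rinow) must host a branch point, which is where the $\delta r$-density enters and where the final smallness condition on $\delta$ comes from. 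All three are secured by the constants you indicate, so the proof goes through; the only cosmetic slip is that $N(f,C)$ should be identified with the degree $\mu(f,C)=\sum_{z\in f\inv\{q\}\cap C} i(z,f)$ (constant over $q\in fC$ for a normal domain $C$), which is what actually gives the superadditivity over the disjoint children.
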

\begin{proof}
  Suppose $B_f$ is not locally porous at $x_0$. 
  Then by the definition of porosity we can fix a sequence $r_n \searrow 0$ such that
  \begin{align}\label{eq:nonporosity}
    B(x,\frac{1}{n} r_n) \cap B_f
    \neq \emptyset 
  \end{align}
  for all $x \in B(x_0,\frac{1}{2}r_n)$ and $n \in \N$.
  For each $j \in \N$, denote by $X_j$ and $Y_j$ the spaces $M$ and $N$ equipped with
  the metrics scaled by the factor $r_j \inv$, respectively. The $L$-BLD-mapping $f \colon M \to N$
  induces canonical $L$-BLD-mappings $f_j \colon X_j \to Y_j$ for each $j \in \N$. Note that by \eqref{eq:nonporosity}
  for all $j \in \N$ and $x \in B_{X_j}(x_0,\frac{1}{2})$, we have
  $B_{X_j}(x,j \inv) \cap B_f \neq \emptyset$.

  We refer to \cite[Section 4]{Luisto-Characterization} for the fact that
  there exists, after possibly passing to a subsequence, locally compact and 
  complete path-metric spaces $(\hat X,\hat x_0)$ and $(\hat Y,\hat y_0)$ together
  with an $L$-Lipschitz mapping $\hat f \colon \hat X \to \hat Y$ 
  such that in the sense of convergence of mapping packages,
  \begin{align*}
    \lim_{j \to \infty} ((X_j,x_0), (Y_j,f(x_0)), f_j) 
    = ((\hat X,\hat x_0), (\hat Y,\hat y_0), \hat f) 
  \end{align*}
  and $\hat f (\hat x_0) = \hat y_0$.
  Note that for any normal neighborhood $U$ of $x_0$ any point 
  in $fU$ has at most finitely many pre-images in $U$
  since $f$ is discrete and $U$ is precompact.
  Thus by Lemma \ref{lemma:Equidistribution} there exists
  $N_0 \in \N$ such that
  $U \cap f \inv \{ f(x) \} \leq N_0$ for all $x \in U$.
  This implies that for any $r>0$ and all $j \in \N$ large enough,
  \begin{align}\label{eq:Bound}
    \# \left( B_{X_j}(x_0,r) \cap f \inv \{ f(x) \} \right)
    \leq N_0
  \end{align}
  for all $x \in X_j$. Thus the mappings $f_j \colon X_j \to Y_j$ 
  have a uniform bound on their multiplicity, and 
  by \cite[Theorem 4.1]{Luisto-Characterization} the limiting
  map $\hat f \colon \hat X \to \hat Y$ is $L$-BLD. 
  (In the proof of \cite[Theorem 4.1]{Luisto-Characterization} the assumption 
  about having spaces of type A with uniform constants is only used
  to guarantee a uniform multiplicity bound (N), which we have here
  from \eqref{eq:Bound}.)

  We show next that $B_{\hat f} \supset B_{\hat X}(\hat x_0, \frac{1}{4})$.
  To this end, let $\hat y \in B_{\hat X}(\hat x_0, \frac{1}{4})$.
  Let $(y_j)$ be a sequence in $\prod_j X_j$ corresponding to $\hat y$ and
  fix $\eps \in (0,1/4)$. For each $j$ we study the ball $B_j \colonequals B_{X_j}(y_j,\eps)$.
  By the definition of the spaces $X_j$ and equation \eqref{eq:nonporosity},
  there exists $b_j \in B_{f_j} \cap B_j$
  for all large enough $j$. 
  Now by the Newman property there exists a constant $\delta_0 >0$ such that,
  for all $j$ large enough, there exists a point $z_j \in B_j$ for which
  \begin{align*}
    \diam(B(b_j,\frac{1}{2}) \cap f_j \inv \{ f(z_j) \}) 
    \geq \delta_0.
  \end{align*}
  From these pre-images of relatively large diameter 
  we extract two sequences, $(w_j)$ and $(w_j')$ with $d_{X_j}(w_j,w_j') \geq \delta_0$ for all
  $j \in \N$. After passing to a subsequence if necessary these sequences converge to 
  points $\hat w, \hat w ' \in B(\hat y, \eps) \subset \hat X$, respectively,
  such that $\hat d ( \hat w , \hat w') \geq \delta_0 > 0$.
  Furthermore the points $\hat w$ and $\hat w'$ map to the same point under $\hat f$. Thus $\hat f$ is not
  injective in $B(\hat y , \eps)$. Since the argument goes through for any $\eps \in (0, 1/4)$,
  we conclude that $\hat y \in B_{\hat f}$.

  Thus the limit map $\hat f$, which is an $L$-BLD mapping between
  locally compact and complete metric spaces contains a ball in its branch set. 
  This is a contradiction with Lemma \ref{lemma:NoInterior}, and the original claim holds.
\end{proof}

We show next that the converse of Proposition \ref{Proposition:NewmanImpliesPorosity}
holds under the extra assumption that the space $N$ is LLC.

The proof of Proposition \ref{prop:PorousToNewman}
relies on the following Zorich-type result for BLD-mappings
which we have not seen explicitly stated in the
literature, even though it is known to the experts in the field;
for a Euclidean version see e.g.\ \cite[Lemma 4.3]{MartioVaisala}.
\begin{lemma}\label{lemma:BLDZorich}
  Let $f \colon M \to N$ be an $L$-BLD-mapping 
  between generalized $n$-manifolds both equipped with a complete path-metric.
  Furthermore suppose that $N$ is LLC.  
  Then for each precompact domain $U \subset M$
  there exists constants $r_0 > 0$ and  $\phi > 0$ such that if $B(x,r) \subset U$ and
  $B(x,r) \cap B_f = \emptyset$ for some $x \in X$, $r \in (0,r_0)$, then $f|_{B(x, \phi r)}$ is injective.
\end{lemma}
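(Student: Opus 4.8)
The plan is to argue by contradiction, via a monodromy (path–lifting) argument: a branch-free region maps as a \emph{covering} onto a ball in $N$, and the LLC hypothesis on $N$ is used to trivialize the monodromy of the single loop that matters. Throughout I would use freely that an $L$-BLD map is $L$-LQ (Proposition~\ref{prop:BLDDefinitions}): any lift of a path is at most $L$ times as long as the path downstairs, and $fB(p,s) \supset B(f(p), s/L)$.

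First I would fix uniform constants over the precompact domain $U$. Since $\overline U$ is compact, $f$ is discrete and $U$ is precompact, one obtains a uniform multiplicity bound $N_0$ (exactly as in the proof of Proposition~\ref{Proposition:NewmanImpliesPorosity}, using Lemma~\ref{lemma:Equidistribution}) and, from Lemma~\ref{lemma:TopologicalNormalDomainLemma} together with a compactness argument, a uniform radius $r_1 > 0$ such that $U(x,f,t)$ is a normal domain for every $x \in U$ and $t \in (0,r_1)$; the LLC hypothesis supplies constants $C \ge 1$ and $r_2 > 0$ valid on the compact set $f(\overline U)$. I would then take $\phi$ to be a sufficiently small constant, comparable to $\left(C L^2 (N_0+1)\right)\inv$, and choose $r_0$ small (in terms of $r_1, r_2, \phi$) so that all the scale requirements below hold whenever $r < r_0$.

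Now suppose, for contradiction, that $B(x,r)\cap B_f = \emptyset$ with $r < r_0$ but $f(y) = f(y')$ for distinct $y, y' \in B(x,\phi r)$. Using the path-metric I would choose short paths $\gamma, \gamma'$ from $x$ to $y$ and $y'$; their images $f\gamma, f\gamma'$ then have length at most $2L\phi r$ and so lie in $W := B(f(x), 2L\phi r)$. Setting $t := 2CL\phi r$ and $W^+ := B(f(x), t)$, the LLC condition makes $W$ contractible in $W^+$. The decisive step is the containment $V^+ \subset B(x,r)$ for the $x$-component $V^+ := U(x,f,t)$ of $f\inv W^+$, which I would extract from Lemma~\ref{lemma:DiameterBound} applied to the continuum $\overline{W^+}$ inside a fixed larger normal domain of uniformly bounded multiplicity, giving
\begin{align*}
  \diam V^+ \le 4L(N_0+1)\,t = 8CL^2(N_0+1)\,\phi\, r < r
\end{align*}
by the choice of $\phi$. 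Hence $V^+$ is branch-free and, being a normal domain, $f|_{V^+}\colon V^+ \to W^+$ is a proper local homeomorphism onto the ball $W^+$; as $W^+$ is connected and locally path-connected (generalized manifolds are locally contractible), this map is a covering, and $\gamma,\gamma',y,y'$ all lie in $V^+$.

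It then remains to run the monodromy argument. The loop $\beta := f\gamma \cdot \overline{f\gamma'}$ lies in $W \subset W^+$, so by LLC it is freely null-homotopic in $W^+$ and thus trivial in $\pi_1(W^+, f(x))$; for the covering $f|_{V^+}$ its lift starting at $x$ is therefore a loop. Lifting $f\gamma$ first recovers $\gamma$ (ending at $y$), so lifting $\overline{f\gamma'}$ from $y$ must return to $x$, and reversing yields a lift of $f\gamma'$ starting at $x$ and ending at $y$; by uniqueness of lifts it coincides with $\gamma'$, forcing $y = y'$, a contradiction. I expect the main obstacle to be precisely the containment $V^+ \subset B(x,r)$: balls in $N$ need not be simply connected, so one cannot conclude the branch-free covering $f|_{V^+}$ is trivial and bound $V^+$ that way. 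The argument decouples the two issues — Lemma~\ref{lemma:DiameterBound} with the uniform multiplicity bound controls $\diam V^+$ regardless of the topology of $W^+$, while LLC is used only to kill the single loop $\beta$ rather than the whole covering — and the rest is the bookkeeping of the uniform constants $N_0, r_1, C, r_2$ on $\overline U$ and of $\phi, r_0$ so that every scale condition holds at once.
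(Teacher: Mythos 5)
Your proposal is correct and follows essentially the same route as the paper: both arguments contract the offending loop using the LLC property of $N$ and use Lemma \ref{lemma:DiameterBound} together with a uniform multiplicity bound to confine the lifted data to the branch-free ball $B(x,r)$, the only difference being that you package the lifting step as covering-space monodromy on the component $U(x,f,t)$ while the paper lifts the null-homotopy path-by-path inside the normal domain. One small caveat: your parenthetical claim that generalized manifolds are locally contractible is not needed (and is not true in general); the local path-connectedness of $W^+$ that your covering-map argument requires already follows from $N$ being a complete, locally compact path-metric space, hence geodesic by Hopf--Rinow.
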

\begin{proof}
  Let $U \subset M$ be a precompact domain. Since $\overline{U}$ is compact it can be covered
  with finitely many normal neighborhoods $U(x_j,f,r_j)$.
  Thus by Lebesgue's number lemma there exists a radius $\delta > 0$ such that any 
  ball in $N$ of radius $\delta$ is contained in at least one of the balls $B(f(x_j),r_j)$.
  We may thus assume that $U$ itself is a normal neighborhood by requiring $r_0$ to be small enough.
  Let $D$ be the LLC-constant of the space $N$ with respect to the
  compact set $f\overline{U}$. We denote $k \colonequals N(f,U)$ and set
  $\phi \colonequals (16L^2(k+1)D)\inv$ -- note that $\phi \leq 2\inv$. Finally let $r_0$ be so small
  that $2 \phi L r_0$ is below the LLC-radius of the space $N$ with respect to the compact
  set $f\overline{U}$.
  
  Fix a ball $B(x_0,r) \subset U$ with $B(x_0,r) \cap B_f = \emptyset$
  for some $x_0 \in U$.
  Towards contradiction assume that there exists disjoint points
  $a,b \in B(x_0, \phi r)$ with $f(a) = f(b)$. By the Hopf-Rinow theorem (see e.g.\ \cite[p.\ 9]{Gromov})
  the locally compact and complete path-metric space $M$ is a geodesic space, and so we may fix a geodesic
  $\beta \colon [0,1] \to M$ with $\beta(0) = a, \beta(1) = b$.
  Note that $\diam |\beta| \leq 2\phi r$, so $\beta \colon [0,1] \to U$.
  Since $f$ is $L$-Lipschitz,
  \begin{align*}
    \diam (f| \beta |)
    \leq L \diam (|\beta|)
    \leq 2 L \phi r
    = \frac{r}{8L(k+1)D}.
  \end{align*}
  Since $f(a) = f(b)$, $f \circ \beta \colon [0,1] \to B(f(a), 2L \phi r)$ is a loop based on $f(a)$.
  Thus, be the local LLC-property of $N$, there is a homotopy
  $H \colon [0,1]^2 \to B(f(a),2 D L \phi r) $
  taking the loop $f \circ \beta$ to a constant path $t \mapsto f(a)$.

  Now by the path lifting theorem \cite[Theorem II.3.2]{Rickman-book} each of the
  paths $t \mapsto H(t,s)$, $s \in [0,1]$, has a unique total 
  lift $\alpha_s$ in the normal domain $U$ starting from $a$. On the other hand
  we note that for each $ s \in [0,1]$,
  \begin{align*}
    \diam (f|\alpha_s|)
    \leq \diam (H([0,1]^2))
    \leq 4 D L \phi r
    = \frac{r}{4L(k+1)}
  \end{align*}
  and so by Lemma \ref{lemma:DiameterBound} each of the lifts $\alpha_s$
  has diameter of at most
  \begin{align*}
    2L(k+1)\frac{r}{4L(k+1)}
    = r/2.
  \end{align*}
  In particular this implies that each of the lifts $\alpha_s$
  is contained in the ball $B(x_0,r)$, since
  $a \in B(x_0,\phi r) \subset B(x_0,r/2)$. Thus the lifts do not intersect
  any branch points. From this we conclude
  that the homotopy $H$ lifts to a homotopy $\tilde H$
  in $B(x_0,r)$ contracting the path $\beta$ to a constant path
  while keeping the endpoints $a$ and $b$ fixed. This is
  a contradiction so the original claim holds true:
  $f$ is injective in $B(x,\phi r)$.
\end{proof}

\begin{proposition}\label{prop:PorousToNewman}
  Let $f \colon M \to N$ be an $L$-BLD-mapping between
  a generalized $n$-manifolds equipped with a complete path-metrics and
  let $N$ be locally LLC.
  Suppose $B_f$ is locally porous at $x_0 \in B_f$.
  Then $f$ satisfies a Newman property at $x_0$.
\end{proposition}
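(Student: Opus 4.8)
The plan is to combine the Zorich-type injectivity of Lemma \ref{lemma:BLDZorich} with the additivity of the local index in a normal neighborhood of the branch point. Fix a precompact neighborhood $U'$ of $x_0$ on which both the porosity estimate and Lemma \ref{lemma:BLDZorich} are available, with constants $\delta > 0$ from porosity and $\phi > 0$, $r_0 > 0$ from the Zorich lemma. Given a ball $B(z,r) \subset U'$ with $z \in B_f$ and $r$ small, I would first extract a normal neighborhood of $z$ sitting well inside $B(z,r)$: using the $L$-LQ property (Proposition \ref{prop:BLDDefinitions}) together with Lemma \ref{lemma:TopologicalNormalDomainLemma}, the set $V \colonequals U(z,f,r/(2L))$ is a normal neighborhood satisfying $B(z, r/(2L^2)) \subset V \subset B(z,r/2)$. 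Since $z$ is the only preimage of $f(z)$ in $\overline V$, the multiplicity of $f$ on $V$ equals the local index at $z$, so $N(f,V) = i(z,f) \geq 2$, the inequality holding precisely because $z \in B_f$.

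Next I would locate, via porosity, a point at which $f$ is locally injective. Applying the porosity hypothesis to the ball $B(z, r/(2L^2)) \subset V$ produces a point $y \in V$ with $B(y, \rho) \cap B_f = \emptyset$, where $\rho \colonequals \delta r/(2L^2)$. For $r$ small enough that $\rho < r_0$ and $B(y,\rho) \subset U'$, Lemma \ref{lemma:BLDZorich} guarantees that $f$ is injective on $B(y, \phi\rho)$. The candidate Newman point will be $w \colonequals y$, which lies in $B(z, r/(2L^2)) \subset B(z,r)$, as the Newman property requires.

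The heart of the argument is then to exhibit a second preimage of $f(y)$ inside $V$ forced to be far from $y$. By the additivity of the local index over a fiber in a normal domain (the identity underlying Lemma \ref{lemma:Equidistribution}; see \cite[Proposition I.4.10]{Rickman-book}), one has $\sum_{x \in V \cap f\inv\{f(y)\}} i(x,f) = N(f,V) \geq 2$. Since $y \notin B_f$ contributes index $i(y,f)=1$, there must exist a further preimage $y' \in V$ with $y' \neq y$, and injectivity of $f$ on $B(y,\phi\rho)$ forces $y' \notin B(y,\phi\rho)$, hence $d(y,y') \geq \phi\rho$. As $y,y' \in V \subset B(z,r)$, this yields
\[
  \diam\left(B(z,r) \cap f\inv\{f(y)\}\right) \geq \phi\rho = \frac{\phi\delta}{2L^2}\,r,
\]
so the Newman property holds with $\eps_0 \colonequals \phi\delta/(2L^2)$ on $U'$.

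The point I expect to require the most care is the bookkeeping of scales: $U'$ and the threshold on $r$ must be chosen so that the porosity ball, the Zorich injectivity ball, and the normal neighborhood $V$ all lie in the domain where Lemma \ref{lemma:BLDZorich} is valid and below its radius $r_0$. The index-additivity step is what lets me sidestep the otherwise delicate question of whether $f(y)$ is a regular value: even if $f(y) \in fB_f$, the index sum still produces a distinct preimage in $V$, so no nowhere-density statement about $fB_f$ is needed. The normal-neighborhood size estimates $B(z,r/(2L^2)) \subset V \subset B(z,r/2)$ follow from the $L$-LQ inclusions together with a path-lifting and diameter bound in the spirit of Lemma \ref{lemma:DiameterBound}, which I would record as a short preliminary step.
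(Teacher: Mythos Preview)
Your overall strategy is correct and closely parallels the paper's: locate a branch--free ball via porosity, apply Lemma~\ref{lemma:BLDZorich} to get a definite injectivity radius, and then produce a second preimage of the image of the porosity point at distance at least $\phi\rho$. The genuine difference is in how the second preimage is found. The paper first perturbs to a point $y\in fU\setminus fB_f$ (using the Cernavskii--V\"ais\"al\"a theorem to know $fB_f$ has empty interior), then lifts a short geodesic from $y$ to $f(z)$ starting at $z$; since $y$ is a regular value, the two lifts land at distinct preimages inside $B(z,s)$. You instead invoke index additivity in a small normal domain $V$ about $z$: since $i(z,f)\ge 2$ and $i(y,f)=1$, a second preimage must exist in $V$. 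Your route has the pleasant feature of avoiding the Cernavskii--V\"ais\"al\"a step altogether.

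There is, however, one point you underestimate. The inclusions $B(z,r/(2L^2))\subset V\subset B(z,r/2)$ for $V=U(z,f,r/(2L))$ are \emph{not} a short bookkeeping step. The left inclusion is immediate from $L$-LQ, but the right one, as you hint, uses that a lift of a geodesic from $f(x)$ to $f(z)$ ends at $z$; this requires $V$ to be a normal \emph{neighborhood} of $z$, which Lemma~\ref{lemma:TopologicalNormalDomainLemma} only guarantees for $r/(2L)<r_z$ with $r_z$ depending on $z$. Since the Newman property must hold for all balls $B(z,r)\subset U'$, you need a uniform lower bound on $r_z$, which is not available a priori. The fix is to work inside a fixed normal domain $U$ of $x_0$ with $k\colonequals N(f,U)$, take instead $V$ to be the $z$-component of $U\cap f^{-1}B(f(z),s)$ with $s$ of order $r/(L(k+1))$, and use Lemma~\ref{lemma:DiameterBound} to force $V\subset B(z,r)$; index additivity still gives $\mu(f,V)\ge i(z,f)\ge 2$ even if $V$ is not a normal neighborhood. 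This only changes the constant in $\eps_0$. The paper's path--lifting argument from $z$ in the fixed normal domain $U$ sidesteps this issue entirely, at the cost of the extra perturbation to a regular value.
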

\begin{proof}
  Since $B_f$ is locally porous at $x_0 \in B_f$, there exists by the definition of porosity
  a neighborhood $U$ of $x_0$ and a constant $\delta > 0$ such that
  for all $x \in U$ with $B(x,r) \subset U$, there exists a point
  $y \in B(x,r)$ with
  \begin{align*}
    B(y,\delta r) 
    \subset B(x,r) \setminus B_f.
  \end{align*}
    Let $B(z,s) \subset U$ with $z \in B_f$. 
  By the definition of the branch, $f$ cannot be locally injective at $z$. Thus
  by Lemmas \ref{lemma:TopologicalNormalDomainLemma} and \ref{lemma:Equidistribution}
  we may assume both that $U$ is a normal domain and that all points in 
  $fU \setminus fB_f$ have an equal amount of pre-images in $U$.
  Note that by the Cernavskii-V\"ais\"al\"a theorem, see \cite{Vaisala}, $fB_f$ has no interior points
  and so its complement in $fU$ is not empty.
  
  By the local porosity of $B_f$, there
  exists a point $w \in B(z,(2L^2)\inv s)$ such that
  \begin{align*}
    B(z,\delta (2L)\inv s) \subset B(z,(2L^2)\inv s) \setminus B_f.
  \end{align*}
  By Lemma \ref{lemma:BLDZorich} there exists a constant
  $\phi > 0$ for which $f|_{B(w,\phi \delta (2L^2)\inv s)}$ is injective.
  The map $f$ is an open mapping, so the set
  $f B(w, \phi \delta (4L^2)\inv s)$ is open and must contain a point $y \in fU \setminus fB_f$.
  Thus the point $y$ has at least two pre-images in $U$, one of them in 
  $B(w,\phi\delta (4L^2)\inv s)$. We claim that there is another pre-image of
  $y$ in the ball $B(z,s)$. Indeed, fix a geodesic $\beta$ connecting $y$ to $f(z)$.
  Since $f$ is $L$-BLD, the path $\beta$ has length of at most $(2L)\inv s$.
  By the path-lifting theorem \cite[Theorem II.3.2]{Rickman-book}, the path $\beta$ has
  at least two total lifts in $U$ starting from $z$ and these two lifts
  have disjoint endpoints in $f \inv \{ y \}$ since $y \notin fB_f$. Furthermore since $f$
  is $L$-BLD, these two lifts have length of at most $2 \inv s$.
  Thus we have
  \begin{align*}
    \diam (B(z,s) \cap f \inv \{ y \} )
    \geq \phi \delta (4L^2)\inv s
  \end{align*}
  since $f|_{B(w,\phi \delta (2L^2)\inv s)}$ is injective,
  and so $f$ has the Newman property at $x_0$.
\end{proof}

\begin{proof}[Proof of Theorem \ref{thm:Main}]
  The proof of the theorem now follows by combining Propositions
  \ref{Proposition:NewmanImpliesPorosity} and
  \ref{prop:PorousToNewman}.
\end{proof}

\subsection{A BLD-mapping with a branch set of positive measure}
\label{sec:example}

\begin{figure}[h!]
  \centering
  \resizebox{\textwidth}{!}{
    \begin{tikzpicture}
      
      \begin{scope}[scale=1]
        
          \draw[fill, black!5]  (2,-2)--(-6,-2)--(-2,2)--(6,2)--cycle;
          \draw[fill, black!0] (0,-1)--(-2,-1)--(0,1)--(2,1)--cycle;
          \draw (0,-1)--(-2,-1)--(0,1)--(2,1)--cycle;

          \begin{scope}[yscale=1]
            \draw[fill,black!10] (0,0) circle (1);          
            \draw (0,0) circle (1);
            \begin{scope}[yscale=0.25]
              \draw[dashed,domain=0:180] plot ({1*cos(\x)},{1*sin(\x)});
              \draw[domain=180:360] plot ({1*cos(\x)},{1*sin(\x)});
            \end{scope}
          \end{scope}

          \draw[fill,black!5] (1,0)--(2,0)--(1,-1)--(0,-1)--cycle;
          \draw (0,-1)--(1,0);
          \draw[dashed] (0,1)--(-1,0);
          \draw[dotted]  (2,-2)--(-6,-2)--(-2,2)--(6,2)--cycle;
          \draw[fill] (-1,0) circle (0.04);
          \draw[fill] (1,0) circle (0.04);

          \begin{scope}[yscale=1]
            \draw[dashed] (0,0) circle (1);
          \end{scope}

          \draw[fill,white] (-2,1) circle (0.5);
          \node at (-2,1) {$\id$};

      \end{scope}
    \end{tikzpicture}
  }
  \caption{The auxiliary domain $W$ in the construction of the mapping in Example \ref{ex:NoPorosity}.}
  \label{fig:NoPorosity}
\end{figure}
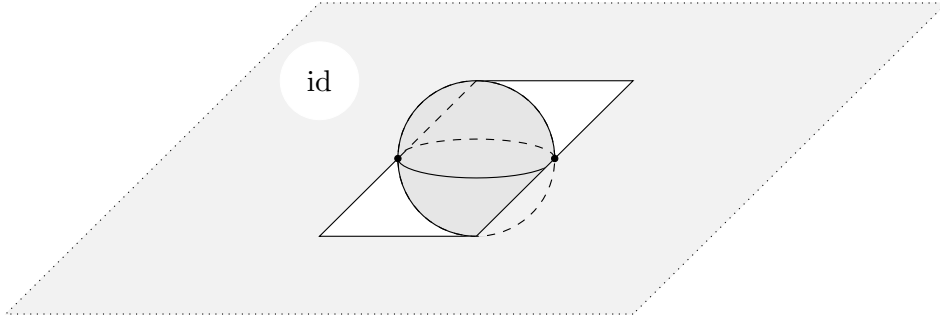

In this section we construct a compact and complete path-metric space $X$
equipped with an Ahlfors $2$-regular measure together with
a $2$-BLD mapping $f \colon X \to X$ such that $\mathcal{H}^2(B_f) > 0$.
Furthermore the constructed space $X$ is of type (A123)
and contains an open dense set which is a $2$-manifold,
though it is not a generalized manifold as it has
local cut-points.

The construction of the space $X$ and the mapping $f \colon X \to X$ in this section is given as a limit
of so called pointed mapping packages -- even though for our purposes this terminology is
a slightly excessive, it enables us to refer directly
to previous work in \cite{Luisto-Characterization}.
Here a \emph{pointed mapping package} is a triple
$\left((X,x_0),(Y,y_0),f\right)$ where $X$ and $Y$
are locally compact and complete 
path-metric spaces
having fixed base-points $x_0 \in X$, $y_0 \in Y$,
and $f \colon X \to Y$ is a continuous
mapping satisfying $f(x_0) = y_0$.
For the definition of the convergence of a sequence of pointed
mapping packages we refer to
\cite[Definition 8.18 and Lemma 8.19]{DavidSemmes},
see also
\cite[Definition 3.8]{KleinerMacKay} and
\cite[Definition 2.1]{GuyCDavid-Tangents}.
In essence, the definition states that
$(X_j, x_j) \to (X,x_0)$ and
$(Y_j, y_j) \to (Y,y_0)$ in the Gromov-Hausdorff sense
and $f_j \to f$ in a natural pointwise manner; indeed for
constant sequences $(X_j,x_j)$ and $(Y_j,y_j)$ the convergence is just the
pointwise convergence of mappings, see again e.g.\ \cite{DavidSemmes}.

Before constructing the sequence of pointed mapping packages, we define some auxiliary 
concepts. In the two-sphere $\bS^2$ we define the \emph{winding map} as the restriction
of the winding map $w \colon \R^n \to \R^n$, $w(r,\theta,z) = (r,2\theta,z)$.
Next we set
\begin{align*}
  W 
  = \bS^2 \cup \left( \{ 0 \} \times \left( [-4,4]^2  \setminus (-2,2) \times (-1,1) \right) \right)
  \subset \R^3,
\end{align*}
see Figure \ref{fig:NoPorosity},
and let $g \colon W \to W$ be a mapping which equals identity
the rectangular annulus, and the $2$-to-1 winding map on the sphere $\bS^2$.
In particular we note that $g$ has the same ``boundary data'' as
$\id \colon [-4,4]^2 \to [-4,4]^2$, even though these mappings are defined on different domains.
Note also that the mapping $g$ has exactly two branch points at the poles
of $\bS^2$. Furthermore a straightforward calculation shows that $g$ is $2$-BLD.

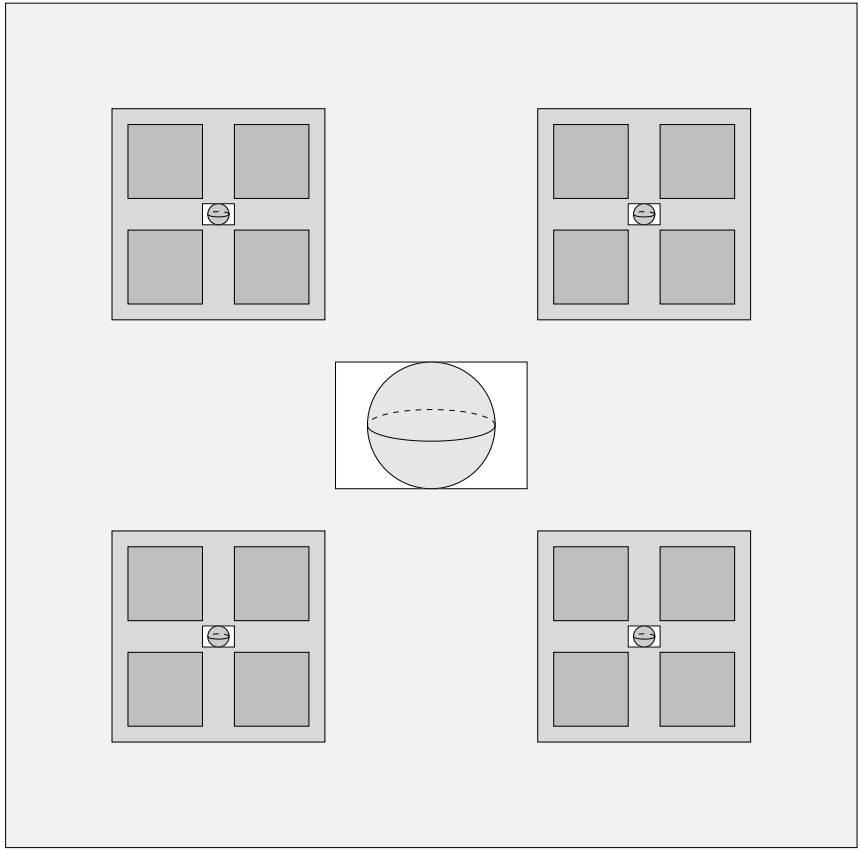
\begin{figure}[h!]
  \centering
  \resizebox{0.9\textwidth}{!}{
    \begin{tikzpicture}

      \begin{scope}[scale=0.8]

        \draw[fill,black!5] (-10,-10)--(10,-10)--(10,10)--(-10,10)--cycle;
        \draw (-10,-10)--(10,-10)--(10,10)--(-10,10)--cycle;

        \begin{scope}[scale=1.5]
          \draw[fill,white] (-1.5,-1)--(1.5,-1)--(1.5,1)--(-1.5,1)--cycle;
          \draw (-1.5,-1)--(1.5,-1)--(1.5,1)--(-1.5,1)--cycle;
          \begin{scope}[yscale=1]
            \draw[fill,black!10] (0,0) circle (1);
            \draw (0,0) circle (1);
            \begin{scope}[yscale=0.25]
              \draw[dashed,domain=0:180] plot ({1*cos(\x)},{1*sin(\x)});
              \draw[domain=180:360] plot ({1*cos(\x)},{1*sin(\x)});
            \end{scope}
          \end{scope}

        \end{scope}

        \foreach \x in {5,-5}
        \foreach \y in {5,-5}
        {
          
          \begin{scope}[shift={(\x,\y)},scale=0.25]
            
            \draw[fill,black!15] (-10,-10)--(10,-10)--(10,10)--(-10,10)--cycle;
            \draw (-10,-10)--(10,-10)--(10,10)--(-10,10)--cycle;
            \draw[fill,white] (-1.5,-1)--(1.5,-1)--(1.5,1)--(-1.5,1)--cycle;
            \draw (-1.5,-1)--(1.5,-1)--(1.5,1)--(-1.5,1)--cycle;
            \begin{scope}[yscale=1]
              \draw[fill,black!20] (0,0) circle (1);
              \draw (0,0) circle (1);
              \begin{scope}[yscale=0.25]
                \draw[dashed,domain=0:180] plot ({1*cos(\x)},{1*sin(\x)});
                \draw[domain=180:360] plot ({1*cos(\x)},{1*sin(\x)});
              \end{scope}
              \foreach \a in {5,-5}
              \foreach \b in {5,-5}
              {
                \begin{scope}[shift={(\a,\b)},scale=0.35]
                  \draw[fill,black!25] (-10,-10)--(10,-10)--(10,10)--(-10,10)--cycle;
                  \draw (-10,-10)--(10,-10)--(10,10)--(-10,10)--cycle;
                \end{scope}            
              }
              
          \end{scope}
          
        \end{scope}          
      }
  
      \end{scope}
    \end{tikzpicture}
  }
  \caption{Cantor set modifications in Example \ref{ex:NoPorosity},
  pictured is part of the domain of $f_2 \colon X_2 \to X_2$.}
  \label{fig:Cantor}
\end{figure}

\begin{example}\label{ex:NoPorosity}
  We define the sequence $(f_j, X_j, x_j, Y_j, y_j)$
  recursively. Since for us $X_j = Y_j$ and $x_j = y_j = (0,0,0)$ for
  all $j\in \N$, we denote the mapping package
  as $(f_j, X_j)$ for brevity.
  
  First we set $X_0 = \{0\} \times \R^2$ and $f_0 = \id \colon X_0 \to X_0$.
  On $X_0$ we fix a sequence $(C_j)$ of nested collections of squares
  giving rise to a Sierpinski carpet of positive measure. We fist set
  $C_0 =\left\{ \{0\} \times [0,1]^2\right\}$
  and for each collection $C_n$ defined we take for every
  $K \in C_n$ four disjoint subsqueares of $K$, each with area
  $(1-4^{-n-1}) \mathcal{H}^2(K)$ and positioned symmetrically in $K$ (see Figure \ref{fig:Cantor}). This collection of $4^{n+1}$ squares is
  then denoted $C_{n+1}$. With this construction the set
  \begin{align*}
    C
    \colonequals \bigcap_{n \in \N} \bigcup_{K \in C_n} K
  \end{align*}
  is a Cantor set of positive measure.

  Suppose next that $(f_n,X_n)$ has been defined. We set
  $(f_{n+1},X_{n+1})$ equal $(f_n, X_n)$ outside the set
  $\cup C_{n+1}$. On each of the squares $S \in C_n$
  we fix a subsquare $V$ around the center of $S$ such that
  $V \cap \cup C_{n+1} = \emptyset$. Finally on each such
  square $V$ we replace the square $V$ and the identity mapping
  $f_{n}|_V$ with a scaled version of the mapping $g \colon W \to W$;
  see again Figure \ref{fig:Cantor}.
  
  The sequence $(f_j,X_j)$
  such defined is a Cauchy sequence with respect to the Hausdorff-Gromov convergence of
  the sequence $(X_j)$ and the $2$-BLD-mappings $(f_j)$ are $2$-Lipschitz.
  Furthermore, the mappings $f_j$ are
  of uniformly bounded multiplicity, i.e.\ for any $f_j$, $j \in \N$, any given point has at
  most two pre-images. Thus by the proof of Theorem 1.4 in \cite{Luisto-Characterization}
  there exists a subsequence of this sequence converging to 
  a $2$-BLD-mapping $f \colon X \to X$, where $X$ is the Gromov-Hausdorff
  limit of $(X_j)$.
  Finally a moment's thought shows that the branch set of $f$ contains the Cantor set $C$ with $\mathcal{H}^2(C) > 0$.
\end{example}

\begin{remark}
  With minor modifications, the mapping in Example
  \ref{ex:NoPorosity} can be modified to an $1$-BLD-mapping $Y \to X$,
  where $Y$ is bilipschitz equivalent to $X$; indeed, this follows by pulling
  back the path length structure
  of the codomain under $f$, see e.g.\ \cite[Theorem 1.8]{Aaltonen}.
\end{remark}

\bigskip

\subsection*{Acknowledgments}
We thank Pekka Pankka for discussions on the topic.
We also extend our thanks to the anonymous referee whose
comments have improved the exposition of this manuscript
-- in particular their suggestion for the proper
generality of the main theorem is gratefully acknowledged.

 
\def\cprime{$'$}\def\cprime{$'$}

\end{document}